\newcommand{\ignore}[1]{}
\newcommand{\boxi}{\ensuremath{\mathrm{box}}}
\newcommand{\bbox}{\rule{0.6em}{0.6em}}
\begin{document}

\title{Boxicity of Leaf Powers}
\author{L. Sunil Chandran\and Mathew C. Francis\and Rogers Mathew}
\institute{Department of Computer Science and Automation, Indian Institute of
Science, Bangalore -- 560012, India.\\
\texttt{\{sunil,mathew,rogers\}@csa.iisc.ernet.in}}
\maketitle
\bibliographystyle{plain}
\begin{abstract}
The boxicity of a graph $G$, denoted as $\boxi(G)$ is defined as the minimum
integer $t$ such that $G$ is an intersection graph of axis-parallel
$t$-dimensional boxes. A graph $G$ is a $k$-leaf power if there exists
a tree $T$ such that the leaves of the tree correspond to the vertices
of $G$ and two vertices in $G$ are adjacent if and only if their corresponding
leaves in $T$ are at a distance of at most $k$. Leaf powers are a subclass
of strongly chordal graphs and are used in the construction of phylogenetic trees
in evolutionary biology. We show that for a $k$-leaf power $G$, $\boxi(G)\leq
k-1$. We also show the tightness of this bound by constructing a $k$-leaf power
with boxicity equal to $k-1$. This result implies that there exists strongly
chordal graphs with arbitrarily high boxicity which is somewhat counterintuitive.

\medskip\noindent\textbf{Key words: }
Boxicity, leaf powers, tree powers, strongly chordal graphs, interval graphs.
\end{abstract}
\section{Introduction}
An \emph{axis-parallel $k$-dimesional box}, or \emph{$k$-box} in short,
is the Cartesian product $R_1\times R_2\times\cdots\times R_k$ where each
$R_i$ is an interval of the form $[a_i,b_i]$ on the real line.
A 1-box is thus just a closed interval on the real line and a 2-box a
rectangle in $\mathbb{R}^2$ with its sides parallel to the axes.
A graph $G(V,E)$ is said to be an \emph{intersection graph} of $k$-boxes
if there is a mapping $f$ that maps the vertices of $G$ to $k$-boxes
such that for any two vertices $u,v\in V$, $(u,v)\in E(G)\Leftrightarrow
f(u)\cap f(v)\not=\emptyset$. Then, $f$ is called a \emph{$k$-box
representation} of $G$.
Thus interval graphs are exactly the intersection graphs of
1-boxes. Clearly, a graph that is an intersection graph of $k$-boxes is
also an intersection graph of $j$ boxes for any $j\geq k$.
The \emph{boxicity} of a graph $G$, denoted as $\boxi(G)$,
is the minimum integer $k$ such that $G$ is an intersection graph of $k$-boxes.

Roberts\cite{Roberts} gave an upper bound of $n/2$ for the boxicity of any
graph on $n$ vertices and showed that the complete $n/2$-partite graph with 2
vertices in each part achieves this boxicity. Boxicity has also been shown
to have upper bounds in terms of other graph parameters such as the maximum
degree and the treewidth\cite{CN05}.
It was shown in \cite{boxdellogn} that for any graph $G$ on $n$ vertices and
having maximum
degree $\Delta$, $\boxi(G)\leq \lceil(\Delta+2)\ln n\rceil$. The same authors showed
in \cite{CFNMaxdeg} that $\boxi(G)\leq 2\Delta^2$.
This result shows that the boxicity of any
graph with bounded degree is bounded no matter how large the graph is.

The boxicity of several special classes of graphs have also been studied.
Scheinerman \cite{Scheiner} showed that outerplanar graphs have boxicity at
most 2 while Thomassen \cite{Thoma1} showed that every planar graph has
boxicity at most 3.
The boxicity of series-parallel graphs was studied in \cite{CRB1} and that of
Halin graphs in \cite{halinbox}.
\ignore{
	 An interesting class of graphs for which the
	question of
	boxicity is still open is the class of chordal bipartite graphs. It was shown
	in cit that the boxicity of $P_6$-free chordal bipartite graphs is at most 2.
	The authors conjecture that the boxicity of chordal bipartite graphs is at
	most 2. In fact it is not known whether the boxicity of chordal bipartite
	graphs can be bounded by a constant.
	An approach to the problem might be to complete one partition of
	the chordal bipartite graph whereby on ends up with a ``strongly-chordal''
	graph. If a bound of $k$ could be shown for the boxicity of strongly-chordal
	graphs, one automatically shows a bound of $k+1$ for chordal bipartite graphs.
}	

Graphs which have no induced cycle of length at least 4 are called chordal
graphs. Chordal graphs in general can have unbounded boxicity since there
are split graphs (a subclass of chordal graphs) that have arbitrarily high
boxicity \cite{CozRob}. Strongly chordal graphs are chordal graphs
with no induced trampoline\cite{Farber83} (trampolines are also known as
``sun graphs''). Several other characterizations of
strongly chordal graphs can be found in \cite{McKee2003}, \cite{McKee2000},
\cite{Dahlhaus87} and \cite{Dahlhaus98}.
\ignore{
	Powers of strongly chordal graphs have also been shown to be strongly chordal
	though this is not true for chordal graphs.
}
\ignore{ -- for example,
the split graph obtained by completing one partition of the cocktail party
graph (the cocktail party graph.
}

\subsection{Leaf powers}
A graph $G$ is said to be a \emph{$k$-leaf power} if there exists a tree $T$
and a correspondence between the vertices of $G$ and the leaves of $T$ such
that two vertices in $G$ are adjacent if and only if the distance between
their corresponding leaves in $T$ is at most $k$. The tree $T$ is then called
a \emph{$k$-leaf root} of $G$. $k$-leaf powers were introduced by Nishimura
et. al.\cite{Nishimura} in relation to the phylogenetic reconstruction
problem in computational biology. Characterization of 3-leaf powers and a
linear time algorithm for their recognition was given in \cite{3leafpowers}.
Clearly, leaf powers are induced subgraphs of the powers of trees.
Now, since trees are strongly chordal and any power of any strongly chordal
graph is also strongly chordal (as shown in \cite{Raychauduri} and
\cite{Dahlhaus87}), leaf powers are also strongly chordal graphs.
\subsection{Our results}
We show that the boxicity of any $k$-leaf power is at most $k-1$ and also
demonstrate the tightness of this bound by constructing $k$-leaf powers
that have boxicity equal to $k-1$, for $k>1$. The tightness result implies
that strongly chordal graphs can have arbitrary boxicity. This is somewhat
surprising because when we study the boxicity of strongly chordal graphs,
it is tempting to conjecture that boxicity of any strongly chordal graph
may be bounded above by some constant and small examples seem to confirm this
conjecture.
A subclass of strongly chordal graphs, called \emph{strictly chordal graphs}, is
studied in \cite{Kennedy}. The graphs in this class are shown to be
4-leaf powers in \cite{klleaf}. Therefore strictly chordal graphs have boxicity
at most 3.
\section{Definitions and notations}
We study only simple, undirected and finite graphs.
Let $G(V,E)$ denote a graph $G$ on vertex set $V(G)$ and edge set $E(G)$.
For any graph
$G$, the number of edges in it is denoted by $||G||$. Thus, if $P$ is a path,
$||P||$ denotes the length of the path.
If $T$ is a tree that contains vertices $u$ and $v$, then $uTv$ denotes the
unique path in $T$.
For $u,v\in V(T)$, let $d_T(u,v):=||uTv||$ be the distance between $u$ and $v$
in $T$.
The $k$-th power of a graph $G$, denoted by $G^k$, is the graph
with vertex set $V(G^k)=V(G)$ and edge set $E(G^k)=\{(u,v)~|~u,v\in V(G)
\mbox{ and }d_G(u,v)\leq k\}$.

A set $X$ of three independent vertices in a graph $G$ is said to form an
\emph{asteroidal triple} if for any $u\in X$, there exists a path $P$ between
the two vertices in $X-\{u\}$ such that $N(u)\cap V(P)=\emptyset$ where $V(P)$
denotes the set of vertices in $P$. A graph is said to be \emph{asteroidal
triple-free}, or AT-free in short, if it does not contain any
asteroidal triple.

\begin{lemma}[Lekkerkerker and Boland\cite{Lekker}]\label{lekkerlem}
A graph is an interval graph if and only if it is chordal and asteroidal
triple-free.
\end{lemma}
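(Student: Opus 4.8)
\noindent\emph{Interval $\Rightarrow$ chordal and AT-free.} The two implications are handled separately; this direction is short. Fix an interval representation $v\mapsto I_v=[\ell_v,r_v]$. If $v_0v_1\cdots v_{p-1}v_0$ were an induced cycle with $p\ge 4$, pick an index $i$ minimising $r_{v_i}$; since $I_{v_{i-1}}$ and $I_{v_{i+1}}$ each meet $I_{v_i}$, each has left endpoint $\le r_{v_i}$ and, by the choice of $i$, right endpoint $\ge r_{v_i}$, so $r_{v_i}\in I_{v_{i-1}}\cap I_{v_{i+1}}$ and the non-consecutive vertices $v_{i-1},v_{i+1}$ are adjacent --- a contradiction, so $G$ is chordal. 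For AT-freeness, let $a,b,c$ be pairwise non-adjacent; their intervals are disjoint, hence occur in a left-to-right order, say $I_a$ entirely left of $I_b$ entirely left of $I_c$. For any $a$--$c$ path $P$, the set $\bigcup_{x\in V(P)}I_x$ is connected, hence an interval, and contains $r_a$ and $\ell_c$, hence also $\ell_b$; thus some $x\in V(P)$ has $\ell_b\in I_x\cap I_b$, i.e.\ $x\in N(b)$. So no $a$--$c$ path avoids $N(b)$ and $\{a,b,c\}$ is not an asteroidal triple.

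\noindent\emph{Chordal and AT-free $\Rightarrow$ interval (the substantive direction).} I would induct on $|V(G)|$. Since components of a chordal AT-free graph are again chordal and AT-free and a disjoint union of interval graphs is interval, assume $G$ connected; a complete graph is interval, so assume $G$ not complete. By Dirac's theorem a minimal vertex separator $S$ of the chordal graph $G$ is a clique; let $C_1,\dots,C_r$ ($r\ge 2$) be the components of $G-S$ and $G_i:=G[S\cup C_i]$. Each $G_i$ is a proper induced subgraph, hence chordal and AT-free, hence interval by induction, and in each of its representations the intervals of the clique $S$ share a common point (the Helly property on the line). It remains to glue these along $S$. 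Call $G_i$ \emph{dominated} if $S$ is complete to $C_i$, and \emph{flush} if it has a representation in which all intervals of $S$ contain the leftmost, or the rightmost, point used (dominated $\Rightarrow$ flush). If all but at most two of the $G_i$ are dominated and the remaining (at most two) ones are flush on the appropriate sides, the representations glue: place the non-dominated pieces so that the $S$-intervals span the whole picture from those two sides, slot each dominated piece into an interior subsegment with the vertices of $S$ represented by the whole core, and note that cross-piece non-adjacencies survive because distinct $C_i$ occupy disjoint parts of the line while no interval outside $S$ leaves its own piece's region.

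\noindent If this fails --- three pieces $G_{i_1},G_{i_2},G_{i_3}$ are not dominated, or more generally cannot be placed --- I would manufacture an asteroidal triple: pick $a_j\in C_{i_j}$ in each piece, pairwise non-adjacent since they lie in distinct components of $G-S$; an $a_j$--$a_k$ path avoiding $N(a_\ell)$ is routed from $a_j$ through a single separator vertex to $a_k$, staying inside $C_{i_j}\cup S\cup C_{i_k}$, where $N(a_\ell)$ ($\subseteq C_{i_\ell}\cup S$) can only meet that separator vertex --- which we choose to be a non-neighbour of $a_\ell$ --- giving the contradiction. I expect this last construction to be the real obstacle: one must choose the witnesses $a_j$ deep enough inside their components and choose the separator vertices of the connecting paths so as to simultaneously dodge the three neighbourhoods, which requires careful bookkeeping of how each $N(a_j)$ meets $S$ and the other pieces; this is the essence of Lekkerkerker and Boland's original (rather involved) argument, which in fact determines the full list of minimal non-interval chordal graphs. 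A cleaner alternative worth considering is to take a clique tree of $G$, form its clique--vertex incidence matrix, and apply Tucker's forbidden-submatrix characterisation of the consecutive-ones property --- which this matrix has precisely when $G$ is an interval graph --- since each Tucker obstruction translates into either an induced cycle of length $\ge 4$, barred by chordality, or an asteroidal triple, barred by hypothesis.
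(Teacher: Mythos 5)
The paper does not prove this lemma at all: it is quoted as a classical result of Lekkerkerker and Boland with a citation, so there is no in-paper argument to compare yours against. Judged on its own, your forward direction (interval $\Rightarrow$ chordal and AT-free) is complete and correct: the minimal-right-endpoint argument for chordality and the connectedness-of-the-union argument for AT-freeness are both standard and airtight (the only cosmetic point is that if $b$ itself lies on the $a$--$c$ path then $N(b)\cap V(P)$ is nonempty anyway, so the conclusion still holds).

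The converse, which is the substantive content of the theorem, has a genuine gap that you yourself flag. Two specific problems. First, the gluing step is not available from your induction hypothesis: induction only tells you each $G_i=G[S\cup C_i]$ is an interval graph, not that it admits a representation in which the clique $S$ is ``flush'' to one end of the picture; establishing that a chordal AT-free graph has a representation with a prescribed maximal clique at the extreme left is itself a nontrivial lemma (essentially the existence of a suitable end of a consecutive clique ordering), and without it the dominated/flush case analysis does not close. Second, and more seriously, the contrapositive step --- extracting an asteroidal triple when three pieces ``cannot be placed'' --- is exactly the hard part of Lekkerkerker and Boland's proof, and your sketch of it does not go through as written: choosing $a_j\in C_{i_j}$ arbitrarily and routing an $a_j$--$a_k$ path through a single separator vertex avoiding $N(a_\ell)$ fails whenever some $a_\ell$ is adjacent to all of $S$, which is precisely the ``dominated'' situation you cannot exclude for three pieces simultaneously without further argument. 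So the witnesses must be chosen with care (and possibly the separator $S$ itself re-chosen), and that bookkeeping is the proof; it is not done here. The alternative route you mention (clique trees, consecutive-ones property, Tucker's obstructions) is viable but is likewise only named, not carried out. In short: the easy direction is proved, the hard direction is an honest but incomplete plan.
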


If $G_1,\ldots,G_k$ are graphs on the same vertex set $V$, we
denote by $G_1\cap\cdots\cap G_k$ the graph on $V$ with edge set
$E(G_1)\cap\cdots\cap E(G_k)$.

\begin{lemma}[Roberts\cite{Roberts}]\label{robertslem}
For any graph $G$, $\boxi(G)\leq k$ if and only if there exists a collection
of $k$ interval graphs $I_1,\ldots,I_k$ such that $G=\bigcap_{i=1}^k I_i$.
\end{lemma}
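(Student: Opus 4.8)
The plan is to prove both directions by moving back and forth between a $k$-box representation of $G$ and a $k$-tuple of interval representations, relying on the single elementary fact that two axis-parallel boxes intersect precisely when, for each of the $k$ coordinate axes, their projections onto that axis intersect.

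For the forward implication, suppose $\boxi(G)\le k$ and fix a $k$-box representation $f$ sending each vertex $v$ to $f(v)=R_1(v)\times\cdots\times R_k(v)$, where each $R_i(v)$ is a closed interval. For each $i\in\{1,\ldots,k\}$ I would define $I_i$ to be the graph on $V(G)$ in which $u$ and $v$ are adjacent exactly when $R_i(u)\cap R_i(v)\neq\emptyset$; by construction the assignment $v\mapsto R_i(v)$ is an interval representation of $I_i$, so each $I_i$ is an interval graph. Since $f(u)\cap f(v)\neq\emptyset$ holds iff $R_i(u)\cap R_i(v)\neq\emptyset$ for every $i$, an edge lies in $G$ iff it lies in every $I_i$; that is, $G=\bigcap_{i=1}^k I_i$.

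For the reverse implication, suppose $G=\bigcap_{i=1}^k I_i$ with each $I_i$ an interval graph on the common vertex set $V(G)$, and fix for each $i$ an interval representation $g_i$ of $I_i$. I would set $f(v):=g_1(v)\times\cdots\times g_k(v)$, which is a $k$-box. Again $f(u)\cap f(v)\neq\emptyset$ iff $g_i(u)\cap g_i(v)\neq\emptyset$ for all $i$, iff $(u,v)\in E(I_i)$ for all $i$, iff $(u,v)\in E(G)$; hence $f$ is a $k$-box representation of $G$ and $\boxi(G)\le k$.

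There is essentially no hard step here. The one genuinely load-bearing observation, which I would isolate explicitly, is that a product of intervals intersects another such product iff each pair of corresponding factor-intervals intersects, since this is exactly what decouples the $k$ coordinates. The only point requiring a little care is the degenerate case $k=0$ (equivalently $\boxi(G)=0$): one either adopts the conventions that a $0$-box is a point and the empty intersection of graphs on $V$ is the complete graph—so that both sides just assert $G$ is complete—or one simply states the lemma for $k\ge 1$.
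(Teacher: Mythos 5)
Your proof is correct. The paper itself gives no proof of this lemma---it is quoted from Roberts with a citation---but your argument (projecting a $k$-box representation onto each coordinate to obtain the interval graphs $I_i$, and conversely taking the Cartesian product of interval representations, with the decoupling fact that two boxes intersect iff all $k$ pairs of coordinate projections intersect) is exactly the standard proof of Roberts' result, and the degenerate case $k=0$ you flag never arises in this paper since $k\geq 1$ throughout.
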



A \emph{critical clique} in a graph is a maximal clique such that every vertex
in the
clique has the same neighbourhood in $G$.
The \emph{critical clique graph} of a graph
$G$,
denoted as $CC(G)$, is a graph in which there is a vertex for every critical
clique of $G$ and two vertices in $CC(G)$ are adjacent if and only if the
critical
cliques corresponding to them in $G$ together induce a clique in $G$.

\begin{lemma}\label{boxcclem}
For any graph $G$, $\boxi(G)=\boxi(CC(G))$.
\end{lemma}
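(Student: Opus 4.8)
The plan is to establish the two inequalities $\boxi(CC(G))\le\boxi(G)$ and $\boxi(G)\le\boxi(CC(G))$ separately, after first recording how the adjacencies of $G$ are related to those of $CC(G)$. For a vertex $x$ of $G$ let $C_x$ denote the critical clique of $G$ that contains $x$; since having equal closed neighbourhood $N[x]:=N(x)\cup\{x\}$ is an equivalence relation whose classes are cliques, the critical cliques partition $V(G)$ and $C_x$ is well defined. The observation I would prove first is the following, for distinct vertices $x,y$ of $G$: if $C_x=C_y$ then $xy\in E(G)$; and if $C_x\ne C_y$ then $xy\in E(G)\iff$ every vertex of $C_x$ is adjacent to every vertex of $C_y\iff C_xC_y\in E(CC(G))$. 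The first statement, and the part of the second saying that only the cliques and not the individual vertices matter, follow directly from the fact that all vertices in a critical clique share the same closed neighbourhood; the last equivalence is simply the definition of $CC(G)$.

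For $\boxi(CC(G))\le\boxi(G)$, I would pick one representative vertex $r_C$ from each critical clique $C$ and set $R=\{r_C:C\in V(CC(G))\}$. The observation above shows that $C\mapsto r_C$ is an isomorphism between $CC(G)$ and the induced subgraph $G[R]$. Since restricting any box representation of $G$ to $R$ yields a box representation of $G[R]$ of the same dimension, boxicity cannot increase under taking induced subgraphs, and hence $\boxi(CC(G))=\boxi(G[R])\le\boxi(G)$.

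For the reverse inequality, write $b=\boxi(CC(G))$. If $G$ is complete then $CC(G)=K_1$ and both quantities are $0$; so assume $G$ is not complete, in which case $b\ge 1$ and $CC(G)$ admits a $b$-box representation $g$. Define $f$ on $V(G)$ by $f(x)=g(C_x)$. For distinct vertices $x,y$ of $G$: if $C_x=C_y$ then $f(x)=f(y)$, a nonempty box, so the boxes meet, and indeed $xy\in E(G)$; if $C_x\ne C_y$ then $f(x)\cap f(y)\ne\emptyset\iff C_xC_y\in E(CC(G))\iff xy\in E(G)$ by the observation. Thus $f$ is a $b$-box representation of $G$, giving $\boxi(G)\le b$, and the two inequalities together yield the claim.

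The only step requiring genuine care is the initial observation --- in particular the assertion that two vertices lying in distinct critical cliques are adjacent in $G$ exactly when the two critical cliques are completely joined --- which is precisely where the maximality, i.e.\ the closed-twin property $N[u]=N[v]$, of critical cliques is used. Everything else is bookkeeping, together with the routine fact that boxicity is monotone under taking induced subgraphs.
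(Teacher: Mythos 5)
Your proof is correct and follows essentially the same route as the paper: one inequality via the fact that $CC(G)$ is (isomorphic to) an induced subgraph of $G$, and the other by giving every vertex the box of its critical clique, which is exactly the paper's ``add a twin and copy its box'' argument carried out all at once. Your explicit verification of the adjacency correspondence between $G$ and $CC(G)$ is more detailed than the paper's, but the underlying idea is identical.
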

\begin{proof}
Since $CC(G)$ is an induced subgraph of $G$, $\boxi(CC(G))\leq \boxi(G)$.
Now suppose that $u$ is a vertex in $G$ and $G'$ is the graph formed by adding
a vertex $u'$ to $V(G)$ such that $V(G')=V(G)\cup \{u'\}$ and $E(G')=E(G)\cup
(u,u')\cup\{(x,u')~|~(x,u)\in E(G)\}$. Since a $k$-box representation $f'$ for
$G'$ can be obtained from a $k$-box representation $f$ for $G$ by
extending $f$ to $f'$ by defining $f'(u)=f(u)$,
$\boxi(G')\leq \boxi(G)$. Now since any
graph $G$ can be obtained from $CC(G)$ by repeatedly performing this operation,
$\boxi(G)\leq \boxi(CC(G))$.
\hfill$\qed$
\end{proof}

A graph $G$ is a \emph{$k$-Steiner power} if there exists a tree $T$, called
the \emph{$k$-Steiner root} of $G$ with $|V(T)|\geq |V(G)|$, and an injective
map $f$ from  $V(G)$ to $V(T)$ such that for $u,v\in V(G)$, $(u,v)\in E(G)
\Leftrightarrow d_T(f(u),f(v))\leq k$. Note that $G$ is induced in $T^k$ by
the vertices in $f(V(G))$.

\begin{lemma}[Dom et al.\cite{Dom}]\label{domlem}
For $k\geq 3$, a graph $G$ is a $k$-leaf power if and only if $CC(G)$ is a
$(k-2)$-Steiner power.
\end{lemma}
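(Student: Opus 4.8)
The plan is to prove the two directions of the equivalence separately, in each case moving between a leaf root of $G$ and a Steiner root of $CC(G)$ by adding or deleting a single ``layer'' of leaves. I will use two elementary facts: two vertices of $G$ lie in the same critical clique iff they have the same closed neighbourhood; and distinct critical cliques $C,D$ are adjacent in $CC(G)$ iff every vertex of $C$ is $G$-adjacent to every vertex of $D$, while if they are non-adjacent then \emph{no} vertex of $C$ is adjacent to any vertex of $D$ (since all vertices of a critical clique share a neighbourhood). Both are immediate from the definitions.

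For ``$CC(G)$ is a $(k-2)$-Steiner power $\Rightarrow$ $G$ is a $k$-leaf power'', let $T$ with injective $f:V(CC(G))\to V(T)$ be a $(k-2)$-Steiner root. First I would repeatedly delete any leaf of $T$ that is not an image $f(C)$; this never removes an $f(C)$ and never changes distances among the surviving vertices, so we may assume every leaf of $T$ is some $f(C)$. Then form $T'$ by attaching $|C|$ fresh leaves at $f(C)$ for each critical clique $C$, letting these leaves correspond to the vertices of $C$. Now every leaf of $T'$ corresponds to a vertex of $G$, and the verification is routine: leaves from the same $C$ are at distance $2\le k$; leaves from adjacent $C,D$ are at distance $d_T(f(C),f(D))+2\le(k-2)+2=k$; leaves from non-adjacent $C\ne D$ are at distance $d_T(f(C),f(D))+2\ge(k-1)+2>k$, since $d_T(f(C),f(D))\le k-2$ holds exactly when $C\sim D$. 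With the two facts above this makes $T'$ a $k$-leaf root of $G$.

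For the converse, take any $k$-leaf root $T$ of $G$. The key preliminary step is to normalise $T$ so that all leaves corresponding to one critical clique become siblings: if $\ell_x,\ell_y$ come from the same critical clique but hang from different vertices $p,q$, I would detach $\ell_y$ and reattach it at $p$; since relocating a leaf alters only the distances to that leaf, and $x,y$ have the same closed neighbourhood, the result is again a $k$-leaf root of $G$ on the same vertex set. Iterating (and dealing with $|V(G)|\le 2$ by hand), every critical clique $C$ acquires a common attachment vertex $u_C$, necessarily internal in $T$; and the $u_C$ are distinct, since $u_C=u_D$ would force all of $C\cup D$ to have equal closed neighbourhoods, contradicting maximality of critical cliques. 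Finally let $T'$ be the subtree of $T$ on its internal vertices and put $f(C)=u_C$: then $f$ is injective, and because a path between non-leaves of a tree uses only non-leaves, $d_{T'}(u_C,u_D)=d_T(u_C,u_D)=d_T(\ell_C,\ell_D)-2$ (for any leaves $\ell_C\in C,\ell_D\in D$), which is $\le k-2$ iff $d_T(\ell_C,\ell_D)\le k$ iff $C\sim D$ in $CC(G)$. Hence $T'$ is a $(k-2)$-Steiner root of $CC(G)$.

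The main obstacle is the normalisation step in the converse direction: one has to argue carefully that the leaf-relocation operation really does preserve the represented graph and terminates, and separately dispose of the degenerate trees (a single vertex or edge) for which ``the subtree on internal vertices'' is empty. The other point needing a moment's care is that the Steiner-leaf pruning in the first direction cannot abort the construction — but it cannot, precisely because an image $f(C)$ is never a deletable leaf. The hypothesis $k\ge 3$ enters only to guarantee $k-2\ge 1$, so that ``$T$-distance at most $k-2$'' is capable of encoding an adjacency relation on $CC(G)$ at all.
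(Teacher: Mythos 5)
Your proposal is essentially correct, but note that the paper itself offers no proof of this statement at all: Lemma~\ref{domlem} is imported verbatim from Dom et al.~\cite{Dom}, so there is no in-paper argument to compare against. What you have written is a self-contained reconstruction of the standard argument: going from a $(k-2)$-Steiner root of $CC(G)$ to a $k$-leaf root of $G$ by pruning non-image leaves and pendanting $|C|$ fresh leaves at each $f(C)$, and going back by normalising a $k$-leaf root so that each critical clique's leaves become siblings, then passing to the subtree of internal vertices. The two supporting facts you isolate (critical cliques are the classes of the closed-neighbourhood equivalence; adjacency in $CC(G)$ is an all-or-nothing affair between classes) are correct and are exactly what makes both distance calculations $d+2\le k \Leftrightarrow d\le k-2$ close up. The only point I would tighten: after you relocate $\ell_y$ away from its old parent $q$, the vertex $q$ may drop to degree one and become a leaf of the tree that corresponds to no vertex of $G$; under the paper's definition (a bijection between $V(G)$ and the leaves of the leaf root) you should prune such vertices as you go, which is harmless since deleting a degree-one non-image vertex changes no distance between surviving leaves and does not affect your final subtree of internal vertices. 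With that remark and your own handling of the degenerate one- and two-vertex trees, the argument is complete; since the paper treats the lemma as a black box, your proof is strictly more information than the paper provides.
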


We first study the boxicity of tree powers and then deduce our results for
leaf powers as corollaries.

\section{Boxicity of tree powers}
\subsection{An upper bound}
\par We show that if $T$ is any tree, boxicity of $T^k$ is at most $k+1$.

\medskip
Let $T$ be any tree.
Fix some non-leaf vertex $r$ to be the root of the tree.
Let $m$ be the number of leaves of the tree $T$. Let $l_1,\ldots,l_m$ be the
leaves of $T$ in the order in which they appear in some depth-first traversal
of $T$ starting from $r$.

Define the \emph{ancestor} relation on $V(T)$ as follows: a vertex
$u$ is said to be an ancestor of a vertex $v$, denoted as $u\preceq v$, if
$u\in rTv$. Similarly, we use the notation $u\succeq v$ to denote the fact that
$u$ is a \emph{descendant} of $v$, or in other words, $v$ is an ancestor of
$u$.

For any vertex $u\neq r$, let $p(u)$ be the \emph{parent} of $u$, i.e. the only
ancestor of $u$ adjacent to it. Let $p(r)=r$.
For any vertex $u$,
we define $p^0(u)=u$, $p^1(u)=p(u)$ and $p^i(u)=p(p^{i-1}(u))$, for $i\geq 2$.

For any vertex $u$, define $L(u)$ to be the set of indices
of leaves of $T$ that are descendants of $u$, i.e., $L(u)=\{i~|~l_i\succeq u\}$.
Define $s(u)=\min\{L(u)\}$ and $t(u)=\max\{L(u)\}$.

\ignore{
	\begin{lemma}\label{dfslemma}
	\begin{enumerate}
	\item If $u\preceq v$ then $L(v)\subseteq L(u)$.
	item If $u\not\preceq v$ and $v\not\preceq u$ then $L(u)\cap L(v)=\emptyset$.
	\end{enumerate}
	\end{lemma}
	Clear from the fact that the leaves were numbered in the order in which they
	appeared in a depth-first traversal of $T$ from $r$.\hfill$\qed$\\
}
\begin{lemma}\label{stlemma}
If $u\preceq v$, then $s(u)\leq s(v)\leq t(v)\leq t(u)$.
\end{lemma}
\begin{proof}
$u\preceq v\Rightarrow L(v)\subseteq L(u)$. Hence the lemma follows.
\hfill$\qed$
\end{proof}

\begin{lemma}\label{stlemma1}
If $u\not\preceq v$ and $v\not\preceq u$, then either $s(u)\leq t(u)<s(v)$ or
$s(v)\leq t(v)<s(u)$.
\end{lemma}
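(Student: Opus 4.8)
The plan is to use the fact that the leaves $l_1,\ldots,l_m$ are numbered according to a depth-first traversal of $T$ from the root $r$, which forces the index set $L(w)$ of any vertex $w$ to be a block of consecutive integers, namely $L(w)=\{s(w),s(w)+1,\ldots,t(w)\}$. Indeed, a depth-first traversal enters the subtree rooted at $w$, visits all of its vertices (in particular all of its leaf descendants) consecutively, and then leaves it, so the leaf descendants of $w$ receive a contiguous range of leaf-indices. I would record this observation first, together with the remark that $L(w)$ is never empty, since every vertex has at least one leaf descendant (itself, if it is a leaf); hence $s(\cdot)$ and $t(\cdot)$ are well defined.

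The second ingredient is that when $u\not\preceq v$ and $v\not\preceq u$, the sets $L(u)$ and $L(v)$ are disjoint. If some $l_i$ were a descendant of both $u$ and $v$, then both $u$ and $v$ would lie on the path $rTl_i$; but any two vertices on a single root-path are comparable under $\preceq$ (one precedes the other along the path), contradicting the incomparability of $u$ and $v$. Hence $L(u)\cap L(v)=\emptyset$.

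Combining the two facts, $\{s(u),\ldots,t(u)\}$ and $\{s(v),\ldots,t(v)\}$ are two disjoint nonempty blocks of consecutive integers, and two such blocks must be linearly separated. Assuming without loss of generality that $s(u)\leq s(v)$, if we had $t(u)\geq s(v)$ then $s(v)$ would lie in $\{s(u),\ldots,t(u)\}=L(u)$, contradicting $s(v)\in L(v)$ and $L(u)\cap L(v)=\emptyset$; therefore $s(u)\leq t(u)<s(v)\leq t(v)$. The case $s(v)\leq s(u)$ is symmetric and yields $s(v)\leq t(v)<s(u)$, which is exactly the statement of the lemma.

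There is no serious obstacle here; the only point needing a little care is justifying the contiguity of $L(w)$, i.e. that the depth-first order really does assign consecutive leaf-indices to the leaves of each subtree. An alternative route that sidesteps stating contiguity is to let $w$ be the nearest common ancestor of $u$ and $v$, let $u'$ and $v'$ be the (necessarily distinct) children of $w$ on the paths $wTu$ and $wTv$, and observe that the traversal processes the entire subtree of one of them — say $u'$ — before starting the other; then every index in $L(u)\subseteq L(u')$ is smaller than every index in $L(v)\subseteq L(v')$, which immediately gives $t(u)<s(v)$, and the other case is symmetric.
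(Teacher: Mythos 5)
Your proposal is correct and follows essentially the same route as the paper: the paper's proof likewise observes that the depth-first numbering makes $L(w)$ a set of consecutive indices and that incomparability of $u$ and $v$ forces $L(u)\cap L(v)=\emptyset$, from which the separation of the two index blocks is immediate. You merely spell out the justifications (contiguity via the subtree being traversed as a block, disjointness via comparability of vertices on a root-to-leaf path) that the paper leaves implicit.
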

\begin{proof}
Since the leaves were ordered in the sequence in which they appear in a
depth-first traversal of $T$ from $r$, for any vertex $u$, the leaves in
$L(u)$ appear consecutively in the ordering $l_1,\ldots,l_m$.
Since $u\not\preceq v$ and $v\not\preceq u$, $L(u)\cap L(v)=\emptyset$.
This proves the lemma.\hfill$\qed$
\end{proof}

\medskip
In order to show that $\boxi(T^k)\leq k+1$, we construct $k+1$ interval graphs
$I',I_0,\ldots,I_{k-1}$ such that $T^k=I'\cap I_0\cap\cdots\cap I_{k-1}$. These
interval graphs are constructed as follows.\\

\noindent\textbf{Construction of $I_i$, $0\leq i \leq k-1$:}\\
Let $f_i(u)$ be the interval assigned to vertex $u$ in $I_i$, i.e.,
$V(I_i)=V(T)$ and $E(I_i)=\{(u,v)~|~f_i(u)\cap f_i(v)\not=\emptyset\}$.
$f_i$ is defined as: $$f_i(u)=[s(p^i(u)),t(p^{k-1-i}(u))]$$
Note that from Lemma \ref{stlemma}, $s(p^i(u))\leq t(p^{k-1-i}(u))$ since
either $p^i(u)\preceq p^{k-1-i}(u)$ or $p^{k-1-i}(u)\preceq p^i(u)$.
Therefore $f_i(u)$ is always a valid closed interval on the real line.

\medskip
\noindent\textbf{Construction of $I'$:}\\
$V(I')=V(T)$ and $E(I')=\{(u,v)~|~f'(u)\cap f'(v)\not=\emptyset\}$
where $f'$ is defined as: $$f'(u)=[d_T(r,u),d_T(r,u)+k]$$

\medskip
\begin{lemma}\label{Iisupgraph}
For $0\leq i\leq k-1$, $I_i$ is a supergraph of $T^k$.
\end{lemma}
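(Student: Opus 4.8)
The plan is to show that every edge of $T^k$ is an edge of $I_i$; since $I_i$ and $T^k$ have the same vertex set $V(T)$, this gives $T^k\subseteq I_i$. So I would fix $u,v\in V(T)$ with $d_T(u,v)\leq k$ and show $f_i(u)\cap f_i(v)\neq\emptyset$, i.e. that the intervals $[s(p^i(u)),t(p^{k-1-i}(u))]$ and $[s(p^i(v)),t(p^{k-1-i}(v))]$ overlap. Two closed intervals $[a_1,b_1]$ and $[a_2,b_2]$ intersect exactly when $a_1\leq b_2$ and $a_2\leq b_1$, so it suffices to prove $s(p^i(u))\leq t(p^{k-1-i}(v))$ together with the symmetric inequality $s(p^i(v))\leq t(p^{k-1-i}(u))$.

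The key observation is that whenever two vertices $x,y$ of $T$ are comparable in the ancestor order, the inequality $s(x)\leq t(y)$ holds automatically: if $x\preceq y$ then Lemma~\ref{stlemma} gives $s(x)\leq s(y)\leq t(y)$, and if $y\preceq x$ then $s(x)\leq t(x)\leq t(y)$ (using $s(x)\leq t(x)$, which holds since $L(x)\neq\emptyset$, and $t(x)\leq t(y)$ again from Lemma~\ref{stlemma}). Thus the whole lemma reduces to the claim that $p^i(u)$ and $p^{k-1-i}(v)$ are always comparable in the ancestor order, and likewise $p^i(v)$ and $p^{k-1-i}(u)$ by the symmetric argument.

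To establish that claim I would let $w$ be the least common ancestor of $u$ and $v$, write $a=d_T(u,w)$ and $b=d_T(w,v)$, and use $a+b=d_T(u,v)\leq k$. Then I split on how $i$ compares with $a$. If $i\geq a$, then $p^i(u)=p^{i-a}(w)$ is an ancestor of $w$, while $p^{k-1-i}(v)$ is comparable with $w$ (it lies on the path $wTv$ when $k-1-i\leq b$, and is an ancestor of $w$ otherwise); in the first situation $p^i(u)\preceq w\preceq p^{k-1-i}(v)$, and in the second both vertices lie on the root-to-$w$ path, so they are comparable either way. If $i<a$, then $p^i(u)$ is a proper descendant of $w$, and since $i\leq a-1$ gives $k-1-i\geq k-a\geq b$, the vertex $p^{k-1-i}(v)$ is an ancestor of $w$, whence $p^{k-1-i}(v)\preceq w\prec p^i(u)$. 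I should also note that $0\leq i$ and $0\leq k-1-i$, so every power $p^{(\cdot)}$ used is well defined, and that the convention $p(r)=r$ causes no difficulty since the root is an ancestor of every vertex.

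The main obstacle is the bookkeeping in this last case analysis: keeping straight which of $p^i(u)$ and $p^{k-1-i}(v)$ lands above or below the least common ancestor $w$, and verifying that the single inequality $a+b\leq k$ is exactly what forces the two vertices to be comparable in every case. Everything else is a direct application of Lemma~\ref{stlemma}.
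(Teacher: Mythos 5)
Your proof is correct and follows essentially the same route as the paper's: identify the least common ancestor, split into cases according to whether $i$ exceeds the distance from one endpoint to that ancestor (your $a+b\leq k$ playing the role of the paper's $d_1+d_2\leq k$), and invoke Lemma~\ref{stlemma} to compare $s$- and $t$-values of comparable vertices. The only difference is organizational: the paper assumes w.l.o.g.\ that $s(p^i(u))\leq s(p^i(v))$ and exhibits $s(p^i(v))$ as a common point of the two intervals, whereas you prove both crossing inequalities symmetrically via the clean reduction ``comparable $\Rightarrow s(x)\leq t(y)$'' --- a cosmetic repackaging of the same argument.
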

\begin{proof}
Let $(u,v)\in E(T^k)$. We will show that $(u,v)\in E(I_i)$.
Let $P$ be the path between $u$ and $v$ in $T$.
Since $(u,v)\in E(T^k)$, $||P||\leq k$. It is easy to see that there is
exactly one vertex $x$ on $P$
such that $x\preceq u$ and $x\preceq v$. Note that $x$ is the least common
ancestor of $u$ and $v$. Let $d_1=||uPx||$ and $d_2=||vPx||$.
Thus, $x=p^{d_1}(u)=p^{d_2}(v)$ and $||P||=d_1+d_2\leq k$.

Let us assume without loss of generality that $s(p^i(u))\leq s(p^i(v))$.

If $i\geq d_2$,
then $p^i(v)\preceq x \preceq u$ and by Lemma \ref{stlemma}, $s(p^i(v))\leq t(u)$ and
also by Lemma \ref{stlemma}, $t(u)\leq t(p^{k-1-i}(u))$ implying that
$s(p^i(v))\leq t(p^{k-1-i}(u))$. We now have $s(p^i(u))\leq s(p^i(v))\leq
t(p^{k-1-i}(u))$. Thus, $f_i(u)\cap f_i(v)\not=\emptyset$ and therefore,
$(u,v)\in E(I_i)$.

Now, if $i<d_2$,
we have $k-1-i\geq d_1$. Therefore, $p^{k-1-i}(u)\preceq x \preceq v$ and by Lemma
\ref{stlemma}, $t(v)\leq t(p^{k-1-i}(u))$ and again by Lemma \ref{stlemma},
$s(p^i(v))\leq t(v)$ and so we have $s(p^i(v))\leq t(p^{k-1-i}(u))$. This
means that $s(p^i(u))\leq s(p^i(v))\leq t(p^{k-1-i}(u))$. Thus, $f_i(u)
\cap f_i(v)\not=\emptyset$ implying that $(u,v)\in E(I_i)$.\hfill$\qed$
\end{proof}

\begin{lemma}\label{I'supgraph}
$I'$ is a supergraph of $T^k$.
\end{lemma}
\begin{proof}
Let $(u,v)\in E(T^k)$. We have to show that $(u,v)\in E(I')$.
Let $P=uTv$ and let $x$ be the vertex on $P$ such that $x\preceq u$
and $x\preceq v$ (i.e., $x$ is the least common ancestor of $u$ and $v$).
Let $d_1=||uPx||$, $d_2=||vPx||$ and $d_3=||rTx||$. We have
$d_T(r,u)=d_3+d_1$ and $d_T(r,v)=d_3+d_2$. Also, since $(u,v)\in E(T^k)$,
$d_1+d_2\leq k$.
Therefore, $|d_1-d_2|\leq k$ which means that $|d_T(r,u)-d_T(r,v)|\leq k$.
Thus, we have $f'(u)\cap f'(v)\not=\emptyset$ implying that $(u,v)\in E(I')$.
\hfill$\qed$
\end{proof}

\begin{lemma}\label{nonedges}
If $(u,v)\not\in E(T^k)$, then either $(u,v)\not\in E(I')$ or
$\exists i$ such that $(u,v)\not\in E(I_i)$.
\end{lemma}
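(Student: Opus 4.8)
The plan is to prove the contrapositive is already half-done: we assume $(u,v)\notin E(T^k)$, so the path $P=uTv$ has length $\|P\|=d_1+d_2\geq k+1$, where as before $x$ is the least common ancestor of $u$ and $v$, $d_1=\|uPx\|$, $d_2=\|vPx\|$, and we write $d_3=d_T(r,x)$ so that $d_T(r,u)=d_3+d_1$ and $d_T(r,v)=d_3+d_2$. We want to show that some one of the $k+1$ interval graphs separates $u$ from $v$. The natural case split is on whether $d_1+d_2$ is ``very large'' (at least $2k$, say), in which case the $I'$ coordinate already does the job, versus the delicate regime $k+1\leq d_1+d_2\leq 2k-1$, where we must produce an index $i\in\{0,\ldots,k-1\}$ with $f_i(u)\cap f_i(v)=\emptyset$.

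First I would dispose of $I'$: since $f'(u)=[d_T(r,u),d_T(r,u)+k]$ and $f'(v)=[d_T(r,v),d_T(r,v)+k]$, these intervals are disjoint exactly when $|d_T(r,u)-d_T(r,v)|=|d_1-d_2|>k$. So if $|d_1-d_2|>k$ we are done immediately via $I'$. Hence we may assume $|d_1-d_2|\leq k$, which combined with $d_1+d_2\geq k+1$ forces both $d_1$ and $d_2$ to be positive and in fact puts them in a controlled range; in particular $d_1,d_2\leq k-1$ is \emph{not} guaranteed, but $\min(d_1,d_2)\leq\lfloor(d_1+d_2+k)/2\rfloor$-type bounds will let me choose a good $i$. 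The key point for the $I_i$'s is the following reformulation: $f_i(u)=[s(p^i(u)),t(p^{k-1-i}(u))]$ and $f_i(v)=[s(p^i(v)),t(p^{k-1-i}(v))]$, and by Lemmas~\ref{stlemma} and~\ref{stlemma1} the interval $[s(w),t(w)]$ behaves like a ``subtree-of-leaves'' interval, nested when the vertices are comparable and disjoint when they are not. So I would argue: pick $i$ so that $p^i(u)$ and $p^i(v)$ are both strict descendants of $x$ lying in different subtrees hanging off $x$ (this needs $i<d_1$ and $i<d_2$), and simultaneously $p^{k-1-i}(u)$ and $p^{k-1-i}(v)$ are \emph{also} both strict descendants of $x$ in those same two different subtrees (this needs $k-1-i<d_1$ and $k-1-i<d_2$, i.e. $i>k-1-d_1$ and $i>k-1-d_2$). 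When all four conditions hold, $L(p^i(u))\cup L(p^{k-1-i}(u))$ lies in one consecutive block of leaf-indices and $L(p^i(v))\cup L(p^{k-1-i}(v))$ in a disjoint consecutive block, so $f_i(u)$ and $f_i(v)$ are disjoint intervals and $(u,v)\notin E(I_i)$.

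The main obstacle, therefore, is to check that the system $\max(0,k-d_1,k-d_2)\leq i\leq\min(k-1,d_1-1,d_2-1)$ has an integer solution precisely under the standing assumptions $d_1+d_2\geq k+1$ and $|d_1-d_2|\leq k$. Writing $a=\min(d_1,d_2)$ and $b=\max(d_1,d_2)$, the constraint becomes $k-a\leq i\leq a-1$ (together with $0\leq i\leq k-1$, which is implied once $1\leq a$ and $a\leq k$). Nonemptiness of $[k-a,a-1]$ is exactly $k-a\leq a-1$, i.e. $2a\geq k+1$. So the argument splits one more time: if $2\min(d_1,d_2)\geq k+1$ we get our index $i$ and win via $I_i$; the remaining subcase is $2\min(d_1,d_2)\leq k$ together with $d_1+d_2\geq k+1$, which forces $\max(d_1,d_2)\geq k+1-\min(d_1,d_2)\geq k+1-k/2 > k/2$, and in fact gives $b-a = d_1+d_2-2a \geq k+1-k = 1$ but more usefully $b \geq k+1-a$. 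Here I would show the $I'$ coordinate does \emph{not} suffice in general, so instead I return to the $I_i$'s but relax which subtree the ``far'' endpoint lives in: when $i\geq a$, one of $p^i(u),p^i(v)$ has already climbed past $x$ (it equals some $p^j(x)$), and I can use Lemma~\ref{stlemma1} applied to the still-low endpoint against an \emph{incomparable} vertex, or re-run the bookkeeping so that exactly one of the two ``$s$-side'' vertices sits below $x$ while the other sits at or above $x$, which still yields disjoint $[s,t]$-intervals provided we pick $i$ at the boundary $i=a$ or $i=a-1$. Carefully enumerating which of $p^i(u),p^i(v),p^{k-1-i}(u),p^{k-1-i}(v)$ lie strictly below $x$, exactly at $x$, or strictly above $x$, and invoking the right one of Lemmas~\ref{stlemma}/\ref{stlemma1} in each configuration, is the bulk of the work; I expect three or four subcases, with the tight case $d_1+d_2=k+1$ being where the index choice $i$ is forced and where one must be most careful that the chosen $i$ is genuinely in range.
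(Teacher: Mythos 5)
There is a genuine gap, and it sits exactly where you defer the work. Your requirement that all four of $p^i(u),p^i(v),p^{k-1-i}(u),p^{k-1-i}(v)$ be strict descendants of $x$ is stronger than necessary, and as you compute it is only satisfiable when $2\min(d_1,d_2)\geq k+1$; the complementary regime (e.g.\ $d_1=k$, $d_2=1$, where $|d_1-d_2|=k-1$ so $I'$ does not help either) is precisely the case your proof does not actually handle. The fallback you sketch there does not work as stated: with $k=4$, $d_1=4$, $d_2=1$ and the leaf-block of the $v$-side preceding that of the $u$-side in the DFS order, your proposed boundary indices $i=a-1=0$ and $i=a=1$ both give $f_i(u)\subseteq f_i(v)$ (because $p^{k-1-i}(v)$ has climbed to or above $x$, so $f_i(v)$ swallows the whole block $[s(x),t(x)]$), whereas the only index that separates the pair is $i=3=d_1-1$. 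So the claim that picking $i$ ``at the boundary $i=a$ or $i=a-1$'' suffices is false, and the enumeration of subcases you promise is where the entire difficulty lives.

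The missing idea is that disjointness of $f_i(u)=[s(p^i(u)),t(p^{k-1-i}(u))]$ and $f_i(v)=[s(p^i(v)),t(p^{k-1-i}(v))]$ requires control of only \emph{two} of the four endpoints: after relabelling so that the leaf-block of the $u$-side child of $x$ precedes that of the $v$-side child (this is the paper's ``without loss of generality'' on $s(p^{d_1-1}(u))\leq s(p^{d_2-1}(v))$), it is enough to have $t(p^{k-1-i}(u))<s(p^i(v))$, which by Lemmas~\ref{stlemma} and~\ref{stlemma1} needs only $k-1-i<d_1$ and $i<d_2$. The window $k-d_1\leq i\leq d_2-1$ is nonempty exactly when $d_1+d_2\geq k+1$, so the single choice $i=d_2-1$ (or $i=k-1$ when $d_2>k$) always works once $d_1,d_2\geq 1$; the positions of $p^i(u)$ and $p^{k-1-i}(v)$ relative to $x$ are irrelevant. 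Note the index is dictated by which block is \emph{leftmost}, not by which of $d_1,d_2$ is smaller. With this observation the paper needs $I'$ only for the ancestor--descendant case $d_1=0$ or $d_2=0$, where $|d_T(r,u)-d_T(r,v)|>k$; your use of $I'$ whenever $|d_1-d_2|>k$ is correct but does not shrink the hard case enough to save the rest of your argument.
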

\begin{proof}
Let $(u,v)\not\in E(T^k)$. Let $P=uTv$ and again let $x$ be the least common
ancestor of $u$ and $v$, i.e., $x$ is the vertex on $P$ such
that $x\preceq u$ and $x\preceq v$. Define $d_1=||uPx||$ and $d_2=||vPx||$;
thus,
$x=p^{d_1}(u)=p^{d_2}(v)$. Since $(u,v)\not\in E(T^k)$, we have $d_1+d_2>k$.

\begin{case}
$d_1\not=0$ and $d_2\not=0$.
\end{case}
Let us assume without loss of generality that $s(p^{d_1-1}(u))\leq s(p^{d_2-1}
(v))$
By the definition of $d_1$ and $d_2$, we have $p^{d_1-1}(u)\not\preceq
p^{d_2-1}(v)$ and $p^{d_2-1}(v)\not\preceq p^{d_1-1}(u)$).
Then by Lemma \ref{stlemma1}, $t(p^{d_1-1}(u))<s(p^{d_2-1}(v))$.
Now applying Lemma \ref{stlemma}, we get
\begin{equation}\label{eqn}
\mbox{for any }i,j\mbox{ such that }0\leq i<d_1,0\leq j<d_2, t(p^i(u))<s(p^j(v))
\end{equation} 

If $1\leq d_2\leq k$, consider the interval graph $I_j$ where $j=d_2-1$.
Now, let $i=k-1-j=k-d_2<d_1$. Now, from (\ref{eqn}), we get
$t(p^i(u))<s(p^j(v))$, that is to say $t(p^{k-1-j}(u))<s(p^j(v))$.
Thus, $f_j(u)\cap f_j(v)=\emptyset$ which means that $(u,v)\not\in E(I_j)$.

If $d_2>k$, then consider $I_{k-1}$. From (\ref{eqn}), we have $t(p^0(u))<
s(p^{k-1}(v))$, and therefore $f_{k-1}(u) \cap f_{k-1}(v)=\emptyset$.
Thus, $(u,v)\not\in E(I_{k-1})$.

\begin{case}
$d_1=0$ or $d_2=0$.
\end{case}
Now, if $d_1=0$, then $u=x\preceq v$ and $d_2>k$. This implies that
$d_T(r,v)>d_T(r,u)+k$.
Similarly, if $d_2=0$, then $v=x\preceq u$ and $d_1>k$ implying that
$d_T(r,u)>d_T(r,v)+k$. In either case,
$f'(u)\cap f'(v)=\emptyset$, and so $(u,v)\not\in E(I')$.
\hfill$\qed$
\end{proof}

\begin{theorem}
For any tree $T$, $\boxi(T^k)\leq k+1$, for $k\geq 1$.
\end{theorem}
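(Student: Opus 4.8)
The plan is simply to assemble the three preceding lemmas. By Lemma~\ref{robertslem}, to prove $\boxi(T^k)\le k+1$ it suffices to exhibit $k+1$ interval graphs on the common vertex set $V(T)$ whose edge-set intersection is exactly $T^k$. I claim the graphs $I',I_0,\ldots,I_{k-1}$ constructed above are such a family. Each of them is an interval graph essentially by construction: it was defined as the intersection graph of an explicit family of closed intervals on the real line (the intervals $f'(u)$, respectively $f_i(u)$), and we have already noted via Lemma~\ref{stlemma} that each $f_i(u)$ is a nondegenerate closed interval, so these are indeed $k+1$ genuine interval graphs, all on vertex set $V(T)$.

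Next I would check that $E(T^k)=E(I')\cap\bigcap_{i=0}^{k-1}E(I_i)$. The inclusion $\subseteq$ is immediate from Lemmas~\ref{Iisupgraph} and~\ref{I'supgraph}, which say precisely that every edge of $T^k$ is present in $I'$ and in each $I_i$, so every edge of $T^k$ lies in the intersection. For the reverse inclusion, take a non-edge $(u,v)\notin E(T^k)$; Lemma~\ref{nonedges} guarantees that $(u,v)$ is absent from $I'$ or from some $I_i$, hence $(u,v)$ is not in the intersection either. Thus the intersection coincides with $T^k$, and Lemma~\ref{robertslem} yields $\boxi(T^k)\le k+1$.

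There is essentially no remaining obstacle here: all the substance — the index bookkeeping showing that the single parameter (chosen as roughly $d_2-1$) always lands in $\{0,\ldots,k-1\}$ and separates $u$ from $v$, together with the role of $I'$ in handling the ancestor/descendant non-edges with $d_1=0$ or $d_2=0$ — has been packaged into Lemmas~\ref{Iisupgraph}--\ref{nonedges}. The theorem is just the observation that these $k+1$ interval graphs realize $T^k$ as an intersection, and the only point worth stating explicitly is that they share the vertex set $V(T)$ so that Lemma~\ref{robertslem} applies verbatim; the bound holds for all $k\ge 1$ since the case analysis in Lemma~\ref{nonedges} never required $k\ge 2$.
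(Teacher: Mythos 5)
Your proposal is correct and follows exactly the paper's argument: the theorem is obtained by combining Lemmas~\ref{Iisupgraph}, \ref{I'supgraph} and \ref{nonedges} to conclude $T^k=I'\cap I_0\cap\cdots\cap I_{k-1}$ and then applying Lemma~\ref{robertslem}. Your additional remarks (that each $f_i(u)$ is a valid interval and that all graphs share the vertex set $V(T)$) are consistent with what the paper establishes during the construction.
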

\begin{proof}
Let $I',I_0,\ldots,I_{k-1}$ be the interval graphs constructed as
explained above.
Lemmas \ref{Iisupgraph}, \ref{I'supgraph} and \ref{nonedges} suffice to show
that $T^k=I'\cap I_0\cap\cdots\cap I_{k-1}$. Thus, by Lemma \ref{robertslem},
we have the theorem.
\hfill\bbox
\end{proof}

\begin{corollary}\label{upboundcor}
If $G$ is a $k$-leaf power, $\boxi(G)\leq k-1$, for $k\geq 2$.
\end{corollary}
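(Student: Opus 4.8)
The plan is to derive Corollary~\ref{upboundcor} from the theorem on tree powers together with the structural lemmas about critical cliques and Steiner powers already established. The key reductions are Lemma~\ref{boxcclem}, which tells us that $\boxi(G)=\boxi(CC(G))$, and Lemma~\ref{domlem}, which tells us that for $k\geq 3$ a graph $G$ is a $k$-leaf power if and only if $CC(G)$ is a $(k-2)$-Steiner power. So the first step is to handle the cases $k=2$ and $k=3$ (where the corollary claims $\boxi(G)\leq 1$, i.e. $G$ is an interval graph, and $\boxi(G)\leq 2$ respectively), and then to treat $k\geq 3$ (or $k\geq 4$) uniformly via the Steiner power characterization.

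For $k\geq 3$, let $H=CC(G)$, which by Lemma~\ref{domlem} is a $(k-2)$-Steiner power with $(k-2)$-Steiner root $T$ and injective map $f\colon V(H)\to V(T)$. By definition of Steiner power, $H$ is the subgraph of $T^{k-2}$ induced on $f(V(H))$. The theorem just proved gives $\boxi(T^{k-2})\leq (k-2)+1=k-1$, and boxicity is monotone under taking induced subgraphs (a $t$-box representation of $T^{k-2}$ restricts to one of any induced subgraph), so $\boxi(H)\leq k-1$. Then by Lemma~\ref{boxcclem}, $\boxi(G)=\boxi(CC(G))=\boxi(H)\leq k-1$, which is exactly the bound claimed for $k\geq 3$.

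It remains to deal with $k=2$. A $2$-leaf power is a graph $G$ with a tree $T$ whose leaves are the vertices of $G$, adjacency meaning the corresponding leaves are at distance at most $2$ in $T$; two leaves are at distance $\leq 2$ exactly when they share their unique neighbour in $T$, so $G$ is a disjoint union of cliques (a cluster graph), which is trivially an interval graph, giving $\boxi(G)\leq 1=k-1$. (If one prefers to keep $k=3$ separate rather than folding it into the $k\geq 3$ argument, the same Steiner-power reasoning works there since Lemma~\ref{domlem} is stated for $k\geq 3$; a $1$-Steiner power is an induced subgraph of $T^1=T$, hence a forest, hence an interval graph, so $\boxi(CC(G))\leq 1$ and $\boxi(G)\leq 1\leq k-1=2$.)

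There is no real obstacle here — the corollary is essentially a bookkeeping consequence of the theorem and the cited lemmas. The only point that needs a line of care is the monotonicity of boxicity under induced subgraphs, which is immediate, and making sure the small cases ($k=2$, and $k=3$ if treated separately) are covered since Lemma~\ref{domlem} has the hypothesis $k\geq 3$. I would write the proof as: reduce via Lemma~\ref{boxcclem} to $CC(G)$, apply Lemma~\ref{domlem} to see $CC(G)$ is an induced subgraph of $T^{k-2}$, invoke the theorem for $\boxi(T^{k-2})\leq k-1$, use induced-subgraph monotonicity, and dispatch $k=2$ by the cluster-graph observation.
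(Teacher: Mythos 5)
Your proof is correct and follows essentially the same route as the paper: reduce to $CC(G)$ via Lemma~\ref{boxcclem}, obtain a $(k-2)$-Steiner root $T$ from Lemma~\ref{domlem}, bound $\boxi(T^{k-2})\leq k-1$ by the theorem and induced-subgraph monotonicity, and handle $k=2$ by the disjoint-cliques observation. The extra remarks on the $k=3$ case are harmless but unnecessary, since the general $k\geq 3$ argument already covers it.
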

\begin{proof}
It is easy to see that 2-leaf powers are collections of disjoint cliques
and thus have boxicity 1. Thus, the corollary is true for $k=2$. For
$k\geq 3$, the statement of the corollary can be proved as follows.
From Lemma \ref{boxcclem}, we have $\boxi(G)=\boxi(CC(G))$.
From Lemma \ref{domlem}, $CC(G)$ has a
$(k-2)$-Steiner root, say $T$. Now, it follows that $\boxi(G)=\boxi(CC(G))
\leq \boxi(T^{k-2})\leq k-1$.

\hfill\bbox
\end{proof}

\subsection{Tightness of the bound}
Let the function $w: \mathbb{Z}^{+} \rightarrow \mathbb{Z}^{+}$ be defined recursively as 
follows: 
\\ $w(1)  =  1$, $w(2) = 3$ and for any $i \geq 3$,  
$$w(i)  =  2(i-1) + 1 + \left[{i-1 \choose 2} \cdot 4\cdot(w(i-2) -1) + 1\right] $$
For any $k \in \mathbb{N}$ and $k \geq 1$, let $S_k$ be the tree shown in figure 
\ref{SkFigure}.

\begin{figure}[h]
\begin{center}
\input{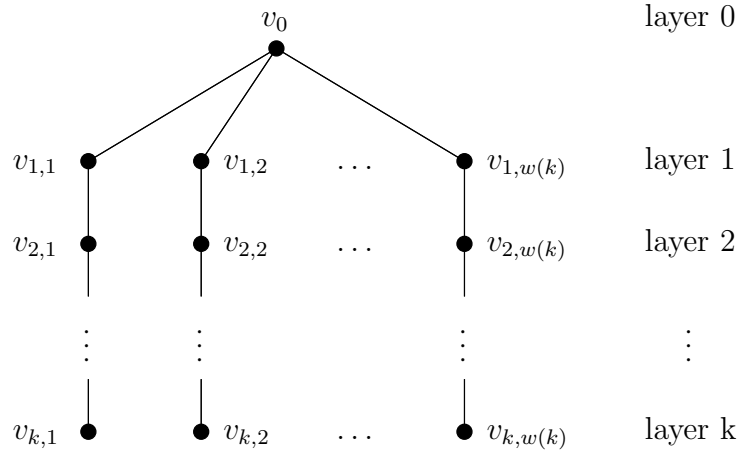}
\caption{Tree $S_k$}
\label{SkFigure}
\end{center}
\end{figure}

\begin{lemma}
\label{SkLemma}
$\boxi\left({(S_k)}^k\right) > k-1$.
\end{lemma}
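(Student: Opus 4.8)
\medskip\noindent\textbf{Proof proposal.}
The plan is to prove, by induction on $k$ in steps of two, the equivalent statement (via Lemma~\ref{robertslem}) that $(S_k)^k$ is not the intersection of any $k-1$ interval graphs. The base cases are $k=2$ and $k=3$. For $k=2$ the three leaves $v_{2,1},v_{2,2},v_{2,3}$ of $S_2$ form an asteroidal triple of $(S_2)^2$: one has $N(v_{2,1})=\{v_0,v_{1,1}\}$, and the path $v_{2,2}\,v_{1,2}\,v_{1,3}\,v_{2,3}$ joins the other two leaves while avoiding $N(v_{2,1})$ (and symmetrically for $v_{2,2}$ and $v_{2,3}$), so $(S_2)^2$ is not an interval graph by Lemma~\ref{lekkerlem}; the case $k=3$ requires a separate bare-hands argument in the same spirit as the inductive step below.

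For the inductive step fix $k\ge 4$, assume $\boxi\bigl((S_{k-2})^{k-2}\bigr)>k-3$, and suppose for contradiction that $(S_k)^k=I_1\cap\cdots\cap I_{k-1}$ with each $I_m$ an interval graph (hence a supergraph of $(S_k)^k$); fix interval representations $g_m$ of the $I_m$. From the distances in $S_k$ one reads off: (i) $v_0$ is universal in $(S_k)^k$; (ii) for every leg $j$ the set $\{v_0,v_{1,j},\ldots,v_{k,j}\}$ is a clique; (iii) for $j\ne j'$, $v_{i,j}$ and $v_{i',j'}$ are adjacent iff $i+i'\le k$. Using (i)--(iii) one checks that, for any set $J$ of legs, the subgraph of $(S_k)^k$ induced on
$$W_J\ :=\ \{v_0\}\ \cup\ \{\,v_{i,j}\ :\ 2\le i\le k-1,\ j\in J\,\}$$
is isomorphic to $(S_{k-2})^{k-2}$ with $|J|$ legs: take $v_0$ as the centre and $v_0,v_{2,j},v_{3,j},\ldots,v_{k-1,j}$ as the $j$-th leg (of length $k-2$). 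Indeed same-leg vertices are adjacent, $v_0$ is adjacent to all, and two vertices of distinct legs at levels $i,i'$ are adjacent iff $i+i'\le k$, i.e.\ iff $(i-1)+(i'-1)\le k-2$, which is the $(k-2)$-leaf-root rule after the shift $i\mapsto i-1$.

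The crux is to choose $J$ with $|J|=w(k-2)$ so that two of the interval graphs, say $I_a$ and $I_b$, restrict to \emph{complete} graphs on $W_J$. Granting this, the graph $(S_k)^k$ restricted to $W_J$ --- which is isomorphic to $(S_{k-2})^{k-2}$ --- equals the intersection of the $k-3$ interval graphs $\{I_m : m\ne a,b\}$ restricted to $W_J$, so $\boxi\bigl((S_{k-2})^{k-2}\bigr)\le k-3$ by Lemma~\ref{robertslem}, contradicting the induction hypothesis. To produce such a $J$ I would prune the set of legs in stages, using two consequences of (ii) and (iii): in each $I_m$ the intervals of a single leg have a common point $p^{(m)}_j\in g_m(v_0)$; and since $k\ge 4$ we have $v_{2,j}\sim v_{2,j'}$ for all legs, so the intervals $\{g_m(v_{2,j}):j\}$ pairwise meet and share a common point. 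Ordering the surviving legs along $g_m$ by the points $p^{(m)}_j$, coordinate $I_m$ is forced to introduce a non-edge among the level-$2$-to-$(k-1)$ vertices of two legs only when those legs lie far apart in that order, so on a suitably ``clustered'' sub-collection $I_m$ becomes complete on the corresponding part of $W_J$. Charging each leg, by pigeonhole over the $\binom{k-1}{2}$ pairs of coordinates, to the pair that handles its deepest vertices, followed by a further constant-factor pruning and with $2(k-1)+1$ legs set aside for end effects, yields exactly the bookkeeping recorded in $w(k)-1 = 2(k-1)+1+\binom{k-1}{2}\cdot 4\cdot(w(k-2)-1)$; hence $w(k)$ legs suffice to extract $w(k-2)$ legs on which two of the $I_m$ are complete.

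The main obstacle is this pruning/pigeonhole step: making precise that, on a large enough sub-collection of legs, two of the $k-1$ interval graphs become complete on $W_J$. The difficulty is that an interval graph can ``separate'' the cross-leg non-edges of $(S_k)^k$ in several qualitatively different patterns --- one leg entirely to one side of another, or the two legs interleaved with only their deepest levels pulled apart, and so on --- and one must check that in every pattern a fixed fraction (depending only on $k$) of the legs can be retained on which that coordinate does no harm to the vertices at levels $2,\ldots,k-1$, and that this can be arranged simultaneously for two coordinates while still keeping $v_0$. Making these constants mesh is precisely what dictates the particular form of the function $w$.
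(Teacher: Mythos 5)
Your overall architecture matches the paper's: induct with step two, and in the inductive step find $w(k-2)$ legs on which two of the $k-1$ interval graphs are complete, so that the remaining $k-3$ graphs realize an induced copy of $(S_{k-2})^{k-2}$ (your isomorphism $v_{i,j}\mapsto \overline{v}_{i-1,j}$ is exactly the one the paper uses, and the arithmetic $(i-1)+(i'-1)\le k-2$ is right). But the step you yourself flag as ``the main obstacle'' --- extracting a large set of legs on which two coordinates become complete --- is precisely the content of the lemma, and you have not supplied the idea that makes it work. The paper's mechanism is: layer $1$ is a clique, so by the Helly property its intervals in $\mathcal{R}_p$ share a region $[y_p,z_p]$; each deepest vertex $v_{m,j}$ must have its interval disjoint from $[y_p,z_p]$ in at least one coordinate $p$ (else it would be adjacent to all of layer $1$); and once $interval(v_{m,j},I_p)$ lies, say, strictly left of $y_p$, every intermediate vertex $v_{i,j}$ ($2\le i\le m-1$) is adjacent both to $v_{m,j}$ and to all of layer $1$, so its interval is forced to contain the single point $y_p$. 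That ``sandwiching'' is what makes a whole coordinate complete on $Y\cup\{v_0\}$ after pigeonholing legs by an unordered pair of coordinates and by side (left/right in each), giving the factor $\binom{k-1}{2}\cdot 4$. Your sketch orders legs by common points inside $g_m(v_0)$ and appeals to ``clustering'' and ``patterns,'' which does not obviously converge to a proof and is not how the constants in $w$ arise.

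Two further concrete discrepancies. First, the term $2(k-1)+1$ in $w(k)$ is not for ``end effects'': in the paper it handles the legs whose deepest vertex is separated from layer $1$ by exactly \emph{one} coordinate ($|F(u)|=1$). Those cannot be fed to the induction (you cannot delete two coordinates and keep one complete), and the paper disposes of them by a different, non-inductive argument: if three such legs share the same single coordinate $I_p$, two lie on the same side of $[y_p,z_p]$, and a containment argument produces an edge $(v_{1,j},v_{m,j'})$ present in every $I_i$ but absent in $S^m$. Your proposal has no analogue of this case split, so even granting your pruning step the bookkeeping would not close. Second, you defer $k=3$ to ``a separate bare-hands argument'' that is never given; the paper instead takes $k=1$ as the (trivial) second base case and lets the inductive step run from $m\ge 3$, so that $m=3$ reduces to $(S_1)^1$. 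You could repair this the same way, but as written the base case is a genuine hole.
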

\begin{proof}
Let us prove this using induction on $k$. It is easy to see that 
$\boxi \left((S_1)^1 \right) > 0$ and $\boxi \left((S_2)^2\right) > 1$
(in $(S_2)^2$ vertices $v_{2,1}, v_{2,2}$ and $v_{2,3}$ form an asteroidal
triple and therefore by Lemma \ref{lekkerlem}, $(S_2)^2$ is not an interval graph).
Let $m \geq 3$ be a positive integer
and assume that the statement of the lemma is true for any $k \leq m-1$.
We shall now prove by contradiction that $\boxi\left((S_m)^m\right) > m-1$.
For ease of notation, let $S = S_m$.
If $\boxi(S^m)\leq m-1$, then by Lemma \ref{robertslem}, there exist
$m-1$ interval graphs $I_1, I_2, \ldots , I_{m-1}$
such that
$S^m = I_1 \cap \cdots \cap I_{m-1}$. Let $\mathcal{I} = \{I_1, I_2, \ldots,
I_{m-1}\}$. For each interval graph $I_p$, choose an interval
representation $\mathcal{R}_p$.
For any $u \in V(S^m)$ and $I_p \in \mathcal{I}$, let 
$left(u,I_p)$ ($right(u,I_p)$) denote the left (right) endpoint of its 
interval in $\mathcal{R}_p$. 
We define $L_i = \{v_{i,1}, v_{i,2}, \ldots ,v_{i,w(m)}\}$, i.e. the set of
all vertices in the $i$-th layer of $S^m$.
Let $interval(u,I_p)$ denote $[left(u,I_p), right(u,I_p)]$, the interval
that corresponds to the vertex $u$ in $\mathcal{R}_p$.
Note that, since $m \geq 3$, the vertices in layer $1$ of $S^m$ form a clique. 
Therefore, by Helly property, in the interval representation $\mathcal{R}_p$ 
of each interval graph $I_p$, the intervals corresponding to the vertices of
layer 1 have a common intersection region. Let $y_p$ and $z_p$ denote the 
left and right endpoints respectively of this common intersection region in
$\mathcal{R}_p$.
That is, $[y_p, z_p] = \bigcap_{j=1}^{w(m)} interval(v_{1,j}, I_p)$.

Since a vertex in $L_m$, say $v_{m,j}$, is not adjacent to any vertex $v_{1,j'}$
in layer 1, for $j' \neq j$, there exists at least one interval graph $I_p$ such that 
$interval(v_{m,j},I_p)$ is disjoint from the abovementioned common intersection
region $[y_p,z_p]$. Define $F(v_{m,j}) = \{I_p\in\mathcal{I} ~|~ interval(v_{m,j},
I_p) \cap
[y_p, z_p] = \emptyset \}$, i.e., the collection of all interval graphs in which
$v_{m,j}$ is not adjacent to at least one vertex in layer 1.

Also define $Q(I_p)=\{v_{m,j}\in L_m ~|~ I_p\in F(v_{m,j})\}$, i.e., the set of
all vertices in layer $m$ whose intervals are disjoint from $[y_p,z_p]$ in
$\mathcal{R}_p$.
Let us partition $Q(I_p)$ into two sets $Q_l(I_p)$ and $Q_r(I_p)$.
$$Q_l(I_p)=\{v_{m,j}\in Q(I_p)~|~left(v_{m,j},I_p) \leq right(v_{m,j},I_p) < y_p
\leq z_p\}$$
$$Q_r(I_p)=\{v_{m,j}\in Q(I_p)~|~y_p \leq z_p<left(v_{m,j},I_p) \leq
right(v_{m,j},I_p)\}$$


Partition $L_m$ into two sets $A$ and $B$ such that $A=\{v_{m,j}~|~
|F(v_{m,j})|=1\}$
and $B=\{v_{m,j}~|~|F(v_{m,j})|>1\}$. Since $|A|+|B|=|L_m|=w(m)=2(m-1)+1+\left[{m-1
\choose 2} \cdot 4 \cdot(w(m-2) -1) + 1\right]$, we encounter at least one of
the following two cases. We will show that both the cases lead to contradictions.
\setcounter{case}{0}
\begin{case}
$|A|\geq 2(m-1)+1$.
\end{case}
Let us partition $A$ into sets $A_1,A_2,\ldots,A_{m-1}$ where $A_i=\{u\in A~|~
F(u)=\{I_i\}\}$. Since $|A|\geq 2(m-1)+1$, there exists an $A_p$ with
$|A_p|\geq 3$. For a vertex $u\in A_p$, $interval(u,I_p)$ can be either to the left
or to the right of $[y_p,z_p]$ in $\mathcal{R}_p$. Thus $A_p$ can be further
partitioned into $A_p^l$ and $A_p^r$ where
$A_p^l=A_p\cap Q_l(I_p)$
 and
$A_p^r=A_p\cap Q_r(I_p)$.
Since $|A_p|\geq 3$, we have $|A_p^l|\geq 2 $ or $|A_p^r|\geq 2$. Without
loss of generality, let $|A_p^l|\geq 2$ with $v_{m,j},v_{m,j'}\in A_p^l$.
Also assume without loss of generality that $right(v_{m,j},I_p)\leq
right(v_{m,j'},I_p)<y_p$. Since $v_{1,j}$ is adjacent to $v_{m,j}$, we have
$interval(v_{1,j},I_p)\cap interval(v_{m,j},I_p)\neq \emptyset$.
Also, by the definition of $[y_p,z_p]$, $interval(v_{1,j},I_p)\cap [y_p,z_p]
\neq \emptyset$. Therefore, $interval(v_{1,j},I_p)$ contains both the points
$right(v_{m,j},I_p)$ and $y_p$, implying that it also contains $right(v_{m,j'},
I_p)$. Thus,
$(v_{1,j},v_{m,j'})\in E(I_p)$.
Since $F(v_{m,j'})=\{I_p\}$, we know that for all $p'\neq p$, $interval(v_{m,j'}
,I_{p'})\cap [y_{p'},z_{p'}]\neq\emptyset$ and therefore $(v_{1,j},v_{m,j'})
\in E(I_{p'})$.
This implies that $(v_{1,j},v_{m,j'})\in E(I_1\cap\ldots\cap I_{m-1})$,
a contradiction.

\begin{case}
$|B| \geq \left[{m-1 \choose 2} \cdot 4\cdot(w(m-2) -1)\right] + 1$.
\end{case}

For $u\in B$, let $g(u)=\min_{I_i\in F(u)}\{i\}$ and let $g'(u)=\min_{I_i\in
F(u)-\{I_{g(u)}\}}\{i\}$. Define $X(u)=\{g(u),g'(u)\}$. Note that both $g(u)$
and $g'(u)$ exists since $u\in B$ and thus $|F(u)|\geq 2$.
Let $B_{ij}=\{u\in B~|~X(u)=\{i,j\}\}$. Thus $\mathcal{P}=\{B_{ij}~|~\{i,j\}
\subseteq \{1,\ldots,m-1\}\}$ is a partition of $B$ into $m-1 \choose 2$ sets.
Since $|B| \geq \left[{m-1 \choose 2} \cdot 4\cdot(w(m-2) -1)\right] + 1$,
there exists $B_{pq}\in \mathcal{P}$ such that $|B_{pq}|\geq 4\cdot(w(m-2)
-1)+1$. Now we partition $B_{pq}$ into 4 sets namely,
\begin{center}
$B_{pq}^{ll}=B_{pq}\cap Q_l(I_p)\cap Q_l(I_q)$\\
$B_{pq}^{lr}=B_{pq}\cap Q_l(I_p)\cap Q_r(I_q)$\\
$B_{pq}^{rl}=B_{pq}\cap Q_r(I_p)\cap Q_l(I_q)$\\
$B_{pq}^{rr}=B_{pq}\cap Q_r(I_p)\cap Q_r(I_q)$
\end{center}
Since $|B_{pq}|\geq 4\cdot(w(m-2)-1)+1$, one of these 4 sets
will have cardinality at least $w(m-2)$. Let this set be $B_{pq}^{lr}$ (the
proof is similar for all the other cases). Thus $B_{pq}^{lr}$ contains $w(m-2)$
vertices, which we will assume without loss of generality to be $v_{m,1},\ldots,
v_{m,w(m-2)}$. Note that for any $v_{m,j}\in B_{pq}^{lr}$, $right(v_{m,j},I_p)
<y_p$ and $z_q<left(v_{m,j},I_q)$.
Let $Y=\{v_{i,j}~|~2\leq i\leq m-1, 1\leq j\leq w(m-2)\}$. Now, since in $I_p$
any vertex $v_{i,j}$ in $Y$ is adjacent to both $v_{m,j}$ and to all the vertices
of layer 1, we have $interval(v_{i,j},I_p)\cap
interval(v_{m,j},I_p)\neq\emptyset$ and $interval(v_{i,j},I_p)\cap
[y_p,z_p]\neq\emptyset$. Since $right(v_{m,j},I_p)<y_p$,
$interval(v_{i,j},I_p)$ contains
the point $y_p$. Similarly, $interval(v_{i,j},I_q)$ contains
the point $z_q$. Thus, $Y$ induces a clique in
both $I_p$ and $I_q$. Since $v_0$ is a universal vertex in $S^m$, $\{v_0\}\cup
Y$ also induces a clique in both $I_p$ and $I_q$.
We claim that in $S^m$, the
induced subgraph of $\{v_0\}\cup Y$ is isomorphic to $(S_{m-2})^{m-2}$.
To see this, let
$V((S_{m-2})^{m-2})=\{\overline{v}_0,\overline{v}_{1,1},\ldots,\overline{
v}_{1,w(m-2)},\overline{v}_{2,1},\ldots,\overline{v}_{2,w(m-2)},\ldots,
\overline{v}_{m-2,1},\ldots,\overline{v}_{m-2,w(m-2)}\}$.
The isomorphism is given by the bijection $f:\{v_0\}\cup Y\rightarrow
V((S_{m-2})^{m-2})$ where $f(v_0)=\overline{v}_0$ and $f(v_{i,j})=
\overline{v}_{i-1,j}$. It can be easily verified that $f$ is an isomorphism
from the graph induced in $S^m$ by $\{v_0\}\cup Y$ to $(S_{m-2})^{m-2}$.
Let
$$G'=\bigcap_{I_i \in \mathcal{I}\setminus \{I_p, I_q\}} I_i$$
Since $\{v_0\}\cup Y$ induced a clique in $I_p$ and $I_q$, the induced
subgraph on $\{v_0\}\cup Y$ in $G'$ is the same as the induced subgraph
on $\{v_0\}\cup Y$ in $S^m$, i.e., $(S_{m-2})^{m-2}$ is an induced subgraph
of $G'$. Therefore,
$\boxi((S_{m-2})^{m-2})\leq \boxi(G')\leq m-3$ (from Lemma \ref{robertslem}).
But this contradicts the induction hypothesis.\hfill$\qed$
\end{proof}

\begin{figure}[h]
\begin{center}
\input{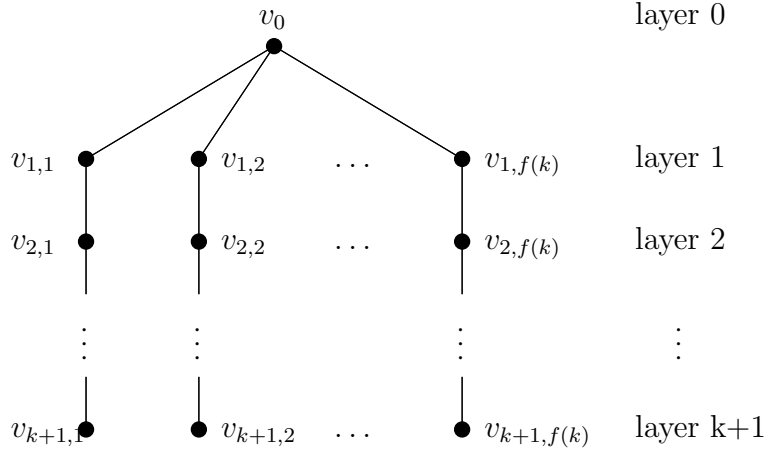}
\caption{Tree $T_k$}
\label{TkFigure}
\end{center}
\end{figure}

\medskip
We now construct a tree $T_k$ (see figure~\ref{TkFigure}), for any
$k \in \mathbb{N}$ and $k\geq 1$. Define $f(k) = 2k\cdot (w(k)-1) + 1$.

\begin{lemma}
\label{TkLemma}
$\boxi\left((T_k)^k\right) > k$.
\end{lemma}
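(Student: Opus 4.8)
The plan is to reduce to Lemma~\ref{SkLemma}: I will show that if $\boxi\left((T_k)^k\right)\le k$, then $(S_k)^k$ is an induced subgraph of an intersection of $k-1$ interval graphs, so $\boxi\left((S_k)^k\right)\le k-1$ by Lemma~\ref{robertslem} and the monotonicity of boxicity under induced subgraphs, a contradiction. (The small cases are handled separately: for $k=2$, three leaves of $(T_2)^2$ lying on different legs form an asteroidal triple, so $(T_2)^2$ is not an interval graph by Lemma~\ref{lekkerlem}; assume $k\ge3$ below.) Suppose $(T_k)^k=I_1\cap\cdots\cap I_k$, and fix an interval representation $\mathcal R_p$ of each $I_p$. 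Layer~$1$ induces a clique in $(T_k)^k$, so by the Helly property the layer-$1$ intervals of $\mathcal R_p$ share a common region, which we call $[y_p,z_p]$.

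In $(T_k)^k$ the leaf $v_{k+1,j}$ is adjacent to $v_{1,j}$ but to no other vertex of layer~$1$ (its distance in $T_k$ to $v_{1,j'}$, $j'\neq j$, exceeds $k$); hence, were $interval(v_{k+1,j},I_p)$ to meet $[y_p,z_p]$ for every $p$, the leaf $v_{k+1,j}$ would be adjacent to all of layer~$1$ in every $I_p$, hence in $(T_k)^k$ --- impossible. So each leaf has its interval in some $\mathcal R_p$ disjoint from $[y_p,z_p]$, lying entirely on one side of it. Assigning to each leaf such a pair (the index $p$ together with the side) --- one of $2k$ possibilities --- and recalling that there are $f(k)=2k(w(k)-1)+1$ leaves, pigeonhole produces a set $J$ of $w(k)$ legs whose leaves are all assigned the same pair; say, for each $j\in J$, $right(v_{k+1,j},I_p)<y_p$.

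Put $Z=\{v_0\}\cup\{v_{i,j}\mid 1\le i\le k,\ j\in J\}$. Computing distances in $T_k$ shows that $(T_k)^k[Z]$ is isomorphic to $(S_k)^k$ (with $w(k)$ legs): $v_0$ is universal in it, each $\{v_{i,j}\mid 1\le i\le k\}$ induces a clique, and for $j\neq j'$ we have $v_{i,j}\sim v_{i',j'}$ iff $i+i'\le k$ --- precisely the adjacencies of $(S_k)^k$. I claim that $Z$ induces a clique in $I_p$. Granting this, intersecting with the complete graph $I_p[Z]$ has no effect, so $\left(\bigcap_{i\neq p}I_i\right)[Z]=(T_k)^k[Z]\cong(S_k)^k$, which exhibits $(S_k)^k$ as an induced subgraph of an intersection of $k-1$ interval graphs, contradicting Lemma~\ref{SkLemma}.

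Proving the claim is, I expect, the crux of the argument. For $v_0$ it is immediate, since $v_0$ is already adjacent to every vertex of $Z$ in $(T_k)^k$. For $1\le i\le k-1$ and $j\in J$, the vertex $v_{i,j}$ is adjacent in $(T_k)^k$ to all of layer~$1$ (distance $i+1\le k$) and to its own leaf $v_{k+1,j}$ (distance $k+1-i\le k$); hence $interval(v_{i,j},I_p)$ meets $[y_p,z_p]$ and also extends to the left of $y_p$, so it contains $y_p$ --- thus layers $1,\dots,k-1$ of $J$ pairwise intersect at $y_p$. The obstinate case is a layer-$k$ vertex $v_{k,j}$: in $(T_k)^k$ it is adjacent only to $v_0$, to the other vertices of its own leg, and to $v_{k+1,j}$, so the Helly step above no longer applies and, a priori, $interval(v_{k,j},I_p)$ might lie entirely to the left of $[y_p,z_p]$. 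Ruling this out --- or otherwise extracting the spider --- seems to require a second, finer case analysis on how the layer-$k$ intervals sit in $\mathcal R_p$, exploiting that $v_{k,j}$ is itself non-adjacent to most of layer~$1$ while $interval(v_{k,j},I_p)$ is forced to reach left of $y_p$ because $v_{k,j}\sim v_{k+1,j}$. Once it is shown that every $v_{k,j}$ with $j\in J$ contains $y_p$ in $\mathcal R_p$, the set $Z$ is a clique in $I_p$ and the argument closes as above.
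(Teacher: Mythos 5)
There is a genuine gap, and you have located it yourself: the claim that $Z$ induces a clique in $I_p$ is never established for the layer-$k$ vertices, and without it the whole reduction collapses (the pairs $v_{k,j},v_{i',j'}$ with $j\neq j'$ are exactly non-edges of $(T_k)^k$, so they are the pairs for which you must manufacture an intersection in $I_p$). The problem is not a missing computation but the choice of anchor. You separated the leaves from the Helly region $[y_p,z_p]$ of layer~$1$, which is the right tool inside $(S_k)^k$ (where $v_0$ is universal and therefore useless), but for $T_k$ it leaves layer $k$ stranded: $v_{k,j}$ is non-adjacent to $v_{1,j'}$ for $j'\neq j$, so nothing forces $interval(v_{k,j},I_p)$ to meet $[y_p,z_p]$, and it may indeed sit entirely to the left of $y_p$. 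I do not see a cheap patch within your framework, because in your chosen $I_p$ the interval of $v_{k+1,j}$ may well still overlap $interval(v_0,I_p)$, so no single point is forced into $interval(v_{k,j},I_p)$.

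The paper's proof avoids this entirely by anchoring at $interval(v_0,I_p)$ instead of $[y_p,z_p]$. Since $d(v_0,v_{k+1,j})=k+1$, each leaf $v_{k+1,j}$ already has some $I_p$ in which its interval is disjoint from (hence strictly to one side of) $interval(v_0,I_p)$; the same $2k$-way pigeonhole then yields $w(k)$ legs, say with $right(v_{k+1,j},I_p)<left(v_0,I_p)$. Now \emph{every} $v_{i,j}$ with $1\leq i\leq k$ is adjacent in $T^k$ both to $v_0$ (distance $i\leq k$) and to $v_{k+1,j}$ (distance $k+1-i\leq k$), so $interval(v_{i,j},I_p)$ reaches from left of $left(v_0,I_p)$ into $interval(v_0,I_p)$ and therefore contains the single point $left(v_0,I_p)$ --- layer $k$ included. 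This is precisely why $T_k$ carries one more layer than $S_k$: the extra layer of leaves is non-adjacent to the root, so the root's own interval can serve as the separator, whereas in $S_k$ the universal root forces one to fall back on layer~$1$. Everything else in your write-up (the pigeonhole count, the isomorphism of $(T_k)^k[Z]$ with $(S_k)^k$, and the final contradiction via Lemma~\ref{robertslem} and Lemma~\ref{SkLemma}) matches the paper and is correct; only the clique claim needs the different anchor.
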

\begin{proof}
We prove this by contradiction. Again, for ease of notation, let $T = T_k$.
Assume that $\boxi(T^k)\leq k$.
By Lemma \ref{robertslem}, there exists a collection of $k$ interval graphs
$\mathcal{I} = \{I_1, I_2, \ldots , I_k \}$ such that
$T^k=\bigcap_{I\in\mathcal{I}} I$.
Now for each interval graph $I_p$, for $1\leq p\leq k$, choose an interval
representation $\mathcal{R}_p$.
For a vertex $u\in V(T^k)$, let $left(u,I_p)$ ($right(u,I_p)$) denote left(right)
endpoint of its interval in $\mathcal{R}_p$.
Let $L_i = \{v_{i,1}, v_{i,2}, \ldots , v_{i,f(k)} \}$ be the set of all vertices
in the $i$-th layer of $T$.

For each vertex $v_{k+1,j}\in L_{k+1}$, since $(v_{k+1,j},v_0)\not\in E(T^k)$,
there exists at least one interval graph $I_p$ in which $interval(v_{k+1,j},
I_p)\cap interval(v_0,I_p)=\emptyset$. For each interval graph $Ip$,
we define $Q(I_p)=\{v_{k+1,j}\in L_{k+1}~|~interval(v_{k+1,j},I_p)\cap
interval(v_0,I_p)=\emptyset \mbox{ and } v_{k+1,j}\not\in
Q(I_{p'})\mbox{ for any }p'<p\}$. Note that $\{Q(I_1),\ldots,Q(I_k)\}$ is a
partition of $L_{k+1}$. We define a partition of $Q(I_p)$ into two sets
$Q_l(I_p)$ and $Q_r(I_p)$ as follows. For any vertex $u\in Q(I_p)$,
$u$ is in $Q_l(I_p)$
if the interval corresponding to $u$ is to the left of
the interval corresponding to $v_0$ in $\mathcal{R}_p$, otherwise it is in
$Q_r(I_p)$. That is,
$$Q_l(I_p)=\{u\in Q(I_p)~|~left(u, I_p) \leq right(u, I_p) < left(v_0, I_p)
\leq right(v_0, I_p)\}$$
$$Q_r(I_p)=\{u\in Q(I_p)~|~left(v_0, I_p) \leq right(v_0, I_p) < left(u, I_p)
\leq right(u, I_p)\}$$

Now,
$\{Q_l(I_i),Q_r(I_i)~|~1\leq i\leq k\}$ is a partition of $L_{k+1}$ into
$2k$ sets. Since $|L_{k+1}|=f(k)=
2k\cdot (w(k)-1) + 1$, there exists some set in this partition with size at least
$w(k)$. Let us assume this set to be $Q_l(I_p)$ for some $p$. The proof
is similar if the set is $Q_r(I_p)$ and therefore will not be
detailed here. Now, we have $|Q_l(I_p)|\geq w(k)$. Let us assume without loss
of generality that $v_{k+1,1},v_{k+1,2},\ldots,v_{k+1,w(k)}\in Q_l(I_p)$.
Let $Y=\{v_{i,j}~|~1\leq i\leq k\mbox{ and }1\leq j\leq w(k)\}$. Note that
any $v_{i,j}\in Y$ is adjacent to vertices $v_{k+1,j}$ and $v_0$
in $T^k$ and therefore also in $I_p$. Thus, $interval(v_{i,j},I_p)\cap
interval(v_{k+1,j},I_p)\neq\emptyset$ and $interval(v_{i,j},I_p)\cap
interval(v_0,I_p)\neq\emptyset$. Now, from the definition of $Q_l(I_p)$,
it is easy to see that $left(v_0,I_p)\in interval(v_{i,j},I_p)$ for any
$v_{i,j}\in Y$. This means that the vertices in $\{v_0\}\cup Y$ induce
a clique in $I_p$.

It is easy to see that in $T^k$, the subgraph induced by $\{v_0\}\cup Y$ is
isomorphic to ${(S_k)}^k$. Let
$$G' = \bigcap_{I_i \in \mathcal{I}\setminus \{I_p\}} I_i $$
Since the induced subgraph on $\{v_0\}\cup Y$ in $I_p$ is a clique,
the subgraph induced by $\{v_0\}\cup Y$ in $G'$ is the same as the subgraph
induced by $\{v_0\}\cup Y$ in $T^k$, i.e., $(S_k)^k$ is an induced
subgraph of $G'$. Therefore,
$\boxi((S_k)^k) \leq \boxi (G') \leq k-1$ (from Lemma \ref{robertslem}).
But this contradicts Lemma \ref{SkLemma}.\hfill$\qed$
\end{proof}

Hence we have the following theorem.

\begin{theorem}
For every $k \in \mathbb{N}$ and $k\geq 1$, $\exists$ a tree $\tau$ such that
$\boxi(\tau^k) > k$.
\end{theorem}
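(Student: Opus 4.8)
The plan is to observe that the tree $T_k$ already defined in the excerpt is exactly the witness we need, so the theorem is just a packaging of Lemma~\ref{TkLemma}. Concretely, I would fix $k\in\mathbb{N}$ with $k\geq 1$, set $\tau = T_k$ (the tree of Figure~\ref{TkFigure}), and invoke Lemma~\ref{TkLemma} to conclude $\boxi(\tau^k) > k$. That is the entire argument for the final statement once Lemma~\ref{TkLemma} is in hand.

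Since Lemma~\ref{TkLemma} is itself proved by reduction to Lemma~\ref{SkLemma}, and the latter by induction on $k$, I would make sure the base cases are genuinely available: for $k=1$, $(S_1)^1$ is a single edge plus isolated structure whose power is disconnected/has an independent pair, so it is not a complete graph, giving $\boxi > 0$; for $k=2$, $(S_2)^2$ contains the asteroidal triple $v_{2,1},v_{2,2},v_{2,3}$ and hence by Lemma~\ref{lekkerlem} is not an interval graph, giving $\boxi > 1$. With these in place the induction in Lemma~\ref{SkLemma} goes through, the counting argument of Lemma~\ref{TkLemma} (splitting $L_{k+1}$ into the $2k$ classes $Q_l(I_i), Q_r(I_i)$ and pigeonholing to find a large class inducing a clique in one $I_p$, then extracting an induced copy of $(S_k)^k$ in the intersection of the remaining $k-1$ interval graphs) applies, and the theorem follows.

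The only real content beyond citing earlier lemmas is verifying that the recursive weight function $w$ and the size $f(k) = 2k(w(k)-1)+1$ are chosen large enough that the pigeonhole steps in both lemmas actually produce the required subgraph sizes $w(m-2)$ and $w(k)$ respectively; this is a routine but essential bookkeeping check against the definition of $w$, and I regard it as the main (still minor) obstacle. One should also double-check that the claimed isomorphisms — between the subgraph induced by $\{v_0\}\cup Y$ in $T_k^{\,k}$ and $(S_k)^k$, and between $\{v_0\}\cup Y$ in $S_m^{\,m}$ and $(S_{m-2})^{m-2}$ — are genuine graph isomorphisms under the stated bijections, which amounts to confirming that distances in the trees translate correctly under the layer-shifting maps. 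Granting all of this, the final theorem is immediate: $\tau = T_k$ works.
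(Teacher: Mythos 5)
Your proposal is correct and matches the paper exactly: the theorem is stated immediately after Lemma~\ref{TkLemma} with no further argument, since taking $\tau = T_k$ and citing that lemma is the entire proof. Your additional remarks about the base cases of Lemma~\ref{SkLemma} and the bookkeeping for $w$ and $f$ concern the proofs of the supporting lemmas, not this statement, and are consistent with what the paper does there.
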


\begin{corollary}
For every $k \in \mathbb{N}$ and $k\geq 2$, $\exists$ a $k$-leaf power $G$ such
that $\boxi(G)=k-1$.
\end{corollary}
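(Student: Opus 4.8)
The plan is to combine the upper bound already established in Corollary~\ref{upboundcor} with the lower bound of Lemma~\ref{TkLemma}, bridging the two by converting a tree power into a leaf power. Since Corollary~\ref{upboundcor} gives $\boxi(G)\le k-1$ for \emph{every} $k$-leaf power $G$, it suffices to exhibit, for each $k\ge 2$, a single $k$-leaf power whose boxicity is at least $k-1$.

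The first step is the elementary observation that for any tree $\tau$ with at least one edge and any $j\ge 1$, the graph $\tau^j$ is a $(j+2)$-leaf power. To see this, form $\tau'$ from $\tau$ by attaching a new pendant leaf $u'$ to every vertex $u$ of $\tau$. Because $\tau$ has an edge, every original vertex of $\tau$ has degree at least $2$ in $\tau'$, so the leaves of $\tau'$ are \emph{exactly} the newly attached vertices $\{u'~|~u\in V(\tau)\}$. Moreover, for any $u,v\in V(\tau)$ the $u'$--$v'$ path in $\tau'$ is the $u$--$v$ path of $\tau$ extended by one edge at each end, so $d_{\tau'}(u',v')=d_\tau(u,v)+2$ (this holds also when $u$ or $v$ is a leaf of $\tau$). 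Hence, under the correspondence $u'\leftrightarrow u$, the tree $\tau'$ is a $(j+2)$-leaf root of $\tau^j$: vertices $u'$ and $v'$ are at distance at most $j+2$ in $\tau'$ precisely when $d_\tau(u,v)\le j$, i.e.\ precisely when $(u,v)\in E(\tau^j)$.

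Next, for $k\ge 3$, I would apply this with $\tau=T_{k-2}$ (the tree of Figure~\ref{TkFigure}, which is well defined and has an edge since $k-2\ge 1$) and $j=k-2$, producing a $k$-leaf power $G\cong (T_{k-2})^{k-2}$. By Lemma~\ref{TkLemma}, $\boxi(G)=\boxi\!\left((T_{k-2})^{k-2}\right)>k-2$, hence $\boxi(G)\ge k-1$; combined with Corollary~\ref{upboundcor} this gives $\boxi(G)=k-1$. For the remaining case $k=2$, I would take $G=K_2$, which is a $2$-leaf power (use the path on three vertices, whose two leaves lie at distance $2$) and has boxicity $1=k-1$.

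All steps are short; the only points requiring a little care are the claim that $\tau'$ has no leaves besides the attached pendants (which is exactly why we assume $\tau$ has an edge) and the routine verification of the distance identity $d_{\tau'}(u',v')=d_\tau(u,v)+2$ for every pair of original vertices. I do not expect a genuine obstacle here: once the tree-power-to-leaf-power reduction is set up, the corollary follows immediately by sandwiching $\boxi(G)$ between the two bounds already proved.
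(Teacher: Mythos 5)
Your proposal is correct, and its overall skeleton is exactly the paper's: take $G=(T_{k-2})^{k-2}$, get $\boxi(G)>k-2$ from Lemma~\ref{TkLemma}, get $\boxi(G)\leq k-1$ from Corollary~\ref{upboundcor}, and handle $k=2$ separately. The one place you diverge is in certifying that $(T_{k-2})^{k-2}$ is a $k$-leaf power. The paper observes that no two vertices of $G$ have the same neighbourhood, so $CC(G)=G$, notes that $T_{k-2}$ itself is a $(k-2)$-Steiner root of $G$ (with no Steiner vertices), and then invokes the characterization of Dom et al.\ (Lemma~\ref{domlem}) to conclude that $G$ is a $k$-leaf power. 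You instead build a $k$-leaf root explicitly: attach a pendant vertex $u'$ to every vertex $u$ of $T_{k-2}$, check that the pendants are then precisely the leaves of the new tree (which needs $T_{k-2}$ to have an edge, as you note), and use $d_{\tau'}(u',v')=d_\tau(u,v)+2$. This is a valid and more elementary argument: it is self-contained and in fact proves the general statement that any $j$-th power of a tree with an edge is a $(j+2)$-leaf power, without appealing to critical cliques or Steiner powers. What the paper's route buys is economy --- Lemmas~\ref{boxcclem} and~\ref{domlem} are already in place because they are needed for the upper bound in Corollary~\ref{upboundcor}, so the paper simply reuses them. A very minor point: for $k=2$ you take $G=K_2$; under a convention where complete graphs have boxicity $0$ one would prefer, say, the disjoint union of two edges, but this matches the paper's own convention that collections of disjoint cliques have boxicity $1$, so it is not a real issue.
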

\begin{proof}
For $k=2$, any $k$-leaf power is a collection of disjoint cliques and thus
has boxicity 1. The proof for the case when $k\geq 3$ is as follows.
Let $G=(T_{k-2})^{k-2}$. Therefore, $G$ is a $(k-2)$-Steiner
power (in fact $T_{k-2}$ is a Steiner root for $G$ with no Steiner vertices).
Since $CC(G)$ and $G$ are the same graph (note that no two vertices in $G$ have
the same neighbourhood), from Lemma \ref{domlem},
$G$ is a $k$-leaf power. Now, Lemma \ref{TkLemma}
implies that $\boxi(G)>k-2$. Using corollary \ref{upboundcor}, we have
$\boxi(G)=k-1$.\hfill\bbox

\end{proof}

\end{document}